\documentclass[reqno,b5paper]{amsart}
\usepackage{amsmath}
\usepackage{amssymb}
\usepackage{amsthm}
\usepackage[nopar]{lipsum}
\usepackage{enumerate}
\usepackage[mathscr]{eucal}
\usepackage{adjustbox}
\usepackage{float}
\usepackage{longtable}
\usepackage{lipsum}
\setlength{\textwidth}{121.9mm}
\setlength{\textheight}{181.2mm}
\usepackage{amssymb}

\def\mod#1{{\ifmmode\text{\rm\ (mod~$#1$)}
\else\discretionary{}{}{\hbox{ }}\rm(mod~$#1$)\fi}}

\usepackage{float}
\restylefloat{table}

\setcounter{topnumber}{9}
\setcounter{bottomnumber}{9}
\setcounter{totalnumber}{20}
\setcounter{dbltopnumber}{9}

\newtheorem{theorem}{Theorem}[section]
\newtheorem{lemma}[theorem]{Lemma}
\newtheorem{corollary}[theorem]{Corollary}

\theoremstyle{remark}

\numberwithin{equation}{section}

\def\mod#1{{\ifmmode\text{\rm\ (mod~$#1$)}
\else\discretionary{}{}{\hbox{ }}\rm(mod~$#1$)\fi}}

\usepackage{hyperref}

\begin{document}
\title{On the factorization of $x^2+D$}

\author{Amir Ghadermarzi}
\address{School of Mathematics, Statistics and Computer Science, College of Science, University of Tehran, Tehran, Iran\\
 \&  School of Mathematics, Institute of Research in Fundamental Science (IPM), P.O.BOX 19395-5746  }
\email{a.ghadermarzi@ut.ac.ir}
\subjclass{Primary 11D61, 11D75}
\thanks{This research was in part supported by a grant from IPM(No.95110044)}

\keywords{Hypergeometric method, Pad\'{e} approximants, Ramanujan–
Nagell equation}

\begin{abstract}
Let $D$ be a positive nonsquare integer, $p$ a prime number with $p \nmid D$, and $0< \sigma < 0.847$. We show that if the equation $x^2+D=p^n$ has a huge solution $(x_0,n_0)_{(p,\sigma)}$, then there exists an effectively computable  constant $C_p$ such that for every $x> C_P$ with $x^2+D=p^n.m $, we have $ m > x^{\sigma}$. As an application, we show that for $x \neq \{1015,5 \}$, if the equation $x^2+76=101^n.m $ holds, we have $ m > x^{0.14}$. 
\end{abstract}

\maketitle

\section{Introduction} \label{intro section}
Let $f(x) $ be a polynomial with integer coefficients. The prime factorization of $f(n)$ is an important question in Diophantine approximation. One direction in considering this question is the following: let $p_1, p_2,\cdots,p_s$ be a set of prime numbers and $f(n)=p_1^{\alpha_1}p_2^{\alpha_2} \cdots p_s^{\alpha_s} . m$ where $(m ,\prod p_i)=1 $, then find a bound on size of $m$ in terms of $n$. From Mahler's work \cite{M} we know that for any $\epsilon > 0$ there exists a constant $C$ such that for any $n$ bigger than $C$, we have $m > n^{1-\epsilon}$. Mahler's proof depends on the p-adic version of Roth's theorem. Thus, One can not effectively compute the value $C$ using Mahler's result. While quantifying this result in general is a difficult task, there are some effective results. Stewart \cite{S} proved effective results for product of consecetive integers using linear form in logarithms. He proved that if $n(n+1) \cdots(n+k)= p_1^{\alpha_1}p_2^{\alpha_2} \cdots p_s^{\alpha_s} . m$ where $(m ,\prod p_i)=1 $, then  $m \gg x^{\sigma(p_1,p_2,\cdots p_s)}$ for some small number $\sigma(p_1,p_2,\cdots p_s) >0.$ A result of the same flavor is obtained by Gross and Vincent \cite{G}. They generalized Stewart's result for polynomials with at least two distinct roots. 
In some specific cases, using Pad\'{e} approximations, better results were obtained by Bennett, Filaseta and Trifonov. 
 They proved that if $n^2+n=2^{\alpha}3^{\beta}m$ and $n>8 $ then $m > n^{0.285}$ \cite{BFT2}, they also showed that if $n^2+7=2^{\alpha}.m$ then either $n \in \{1,3,5,11,181 \} $ or $m > n^{\frac{1}{2}} $ \cite{BFT1}. In this note, we generalize the former result for generalized Ramanujan-Nagell type equations. A generalization of Ramanujan-Nagell equation $ x^2+7=y^n $ of the form
\begin{equation} \label{genram}
x^2+D=\lambda y^n
\end{equation}
is called generalized Ramanujan-Nagell equation. There is a vast literature on bounding the number of solutions for general values of $D$ and explicit determination of the exact solutions for specific values of $D$ in the equation \eqref{genram} (see \cite{Ba}, \cite{Beu1}, \cite{Beu2}, \cite{Bu1} and \cite{Bu2}).
 In this note, we consider a generalization of Ramanujan-Nagell equation of the form $x^2+D$ towards its factorization.
 From now on, let $D$ be a  nonsquare positive integer bigger than 12, and let $p$ be a prime number with $p \nmid D$. Our result relies on the existence of a relatively huge solution to the equation $x^2+D=p^n$. In order to get effective quantitative results, we explicitly determine what we mean by a huge solution. Obviously, the bigger the solution is, the easier it is to obtain a better result. We set a condition for a huge solution that obviously depends on $\sigma$. Assume $(x_0,n_0)$ is a positive solution to the equation $x^2+D=p^n$ with $p \nmid D$, that satisfies the condition:\\
 \begin{equation} \label{sigmacondition}
  \begin{split}
    \left \lvert  x_0+\sqrt{-D} \right \rvert &=  p^{\frac{n_0}{2}} >  C_{\sigma,p}  D^{\eta} \quad \text{when $p$ is an odd prime, and} \\
  \frac{\left \lvert  x_0+\sqrt{-D} \right \rvert}{2} &= 2^{\frac{n_0}{2}-1}>  C_{\sigma,2}    D^{\eta}\quad  \text{when $p$ equals to 2}, 
  \end{split}
  \end{equation}
where $ \eta_{\sigma}=  \frac{7.84-4\sigma}{7.64-9\sigma} $, $ C_{\sigma,p}=(2008.832)^{\frac{1.96-\sigma}{7.64-9\sigma}},$ if $p$ is an odd prime 
 and $C_{\sigma,2}= (7.847)^{\frac{1.96-\sigma}{7.64-9\sigma}}$.\\
We call such a solution a huge solution. For such solutions from now on, let's define:
$$  \beta=  x_0+\sqrt{-D},   \text{  with  } \left\lvert \beta \right \rvert =  p^{\frac{n_0}{2}}   \quad \text{when $p$ is an odd prime, and}$$ 
$$ \beta=  \frac{  x_0+\sqrt{-D} }{2},  \text{  with  } \left\lvert \beta \right \rvert= 2^{\frac{n_0}{2}-1}  \quad \text{when $p$ equals to 2 }. $$ 

   \begin{theorem} \label{theorem1}
Assume that the equation $x^2+D=p^n$ has a solution which satisfies the condition of \eqref{sigmacondition}. Let $M=250n_0$, and $\left \lvert \beta \right \rvert  \geq 90.93$. If $x^2+D=p^n.m $, with $n >M$, then $m > x^{\sigma}$.  
  \end{theorem}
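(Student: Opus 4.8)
The plan is to compare the putative solution of $x^{2}+D=p^{n}m$ with the huge solution inside the imaginary quadratic field $K=\mathbb{Q}(\sqrt{-D})$, to distil from this comparison an unusually sharp Diophantine approximation, and then to contradict that approximation with an explicit hypergeometric (Pad\'e) lower bound; the constants $C_{\sigma,p},\eta_{\sigma}$ of \eqref{sigmacondition} and the thresholds $|\beta|\ge 90.93$, $M=250n_{0}$ are calibrated so that the two estimates collide unless $m>x^{\sigma}$. Concretely, since $x^{2}\equiv-D\pmod p$ the prime $p$ splits, say $(p)=\mathfrak{p}\,\overline{\mathfrak{p}}$, and I normalise $\mathfrak{p}$ so that $(\beta)=\mathfrak{p}^{n_{0}}$ (for $p=2$ let $\mathfrak{p}$ be the prime above $2$ dividing $\beta$). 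As $p\nmid 2D$, exactly one of $x\pm\sqrt{-D}$ is divisible by $\mathfrak{p}^{n}$; replacing $x$ by $-x$ if needed, $(x+\sqrt{-D})=\mathfrak{p}^{n}\mathfrak{b}$ with $\mathfrak{b}$ prime to $p$ and $N\mathfrak{b}=m$. Writing $n=qn_{0}+r$ with $0\le r<n_{0}$, the hypothesis $n>M=250n_{0}$ gives $q\ge 250$, and $(x+\sqrt{-D})/\beta^{q}$ generates $\mathfrak{p}^{r}\mathfrak{b}$, hence (the units being $\pm1$, since $D>12$) equals $\pm\gamma$ for some $\gamma\in\mathcal{O}_{K}$ with $N\gamma=p^{r}m$. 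Setting $\gamma=c+d\sqrt{-D}$ and $\beta^{q}=e+f\sqrt{-D}$ (with $e,f\in\mathbb{Z}$, $\gcd(e,f)=1$ because $\overline{\mathfrak{p}}\nmid(\beta^{q})$) and comparing the coefficients of $\sqrt{-D}$ in $\gamma\beta^{q}=\pm(x+\sqrt{-D})$ yields $cf+de=\pm1$, together with $c^{2}+Dd^{2}=p^{r}m$ and $e^{2}+Df^{2}=p^{qn_{0}}$.

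I would then convert this into an approximation. Conjugating $\gamma\beta^{q}=\pm(x+\sqrt{-D})$ gives the identity $(\beta/\overline{\beta})^{q}=(\alpha\,\overline{\gamma})/(\overline{\alpha}\,\gamma)$ with $\alpha=x+\sqrt{-D}$; as $|\alpha/\overline{\alpha}-1|=2\sqrt{D}/\sqrt{x^{2}+D}$ is minuscule, the unit-modulus number $(\beta/\overline{\beta})^{q}$ lies within $O(\sqrt{D}/x)$ of $\overline{\gamma}/\gamma$, an element of $K$ of height $\le p^{r}m\le p^{n_{0}}m$. After clearing denominators this is a nonzero algebraic integer $(e^{2}-Df^{2}+2ef\sqrt{-D})\,p^{r}m-(c^{2}-Dd^{2}-2cd\sqrt{-D})\,p^{qn_{0}}$ of absolute value $O(\sqrt{D}\,x)$, far below the generic size $p^{n}m\approx x^{2}$ of its entries. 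The huge solution is what makes this exploitable: $z_{0}:=D/x_{0}^{2}=D/(p^{n_{0}}-D)$ is extremely small, precisely because $|\beta|=p^{n_{0}/2}>C_{\sigma,p}D^{\eta_{\sigma}}$ with $\eta_{\sigma}>1$, while $(1+z_{0})^{-1/2}=x_{0}/|\beta|$ attaches $\beta$ to the binomial function $(1+z)^{1/2}$.

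The hypergeometric step comes last. Using the explicit Pad\'e approximants $P_{N}(z),Q_{N}(z)$ of degree $N$ to $(1+z)^{1/2}$, whose remainder $E_{N}(z)=Q_{N}(z)(1+z)^{1/2}-P_{N}(z)$ is divisible by $z^{2N+1}$ and admits an explicit bound, evaluated at $z=z_{0}$ and $z=D/x^{2}$ and combined with the non-vanishing of $E_{N}$ there, one obtains genuinely nonzero auxiliary linear forms and hence an effective lower bound for $q\log(\beta/\overline{\beta})-\log\xi$ that is polynomial in the heights rather than of Baker type; the fast convergence at the tiny point $z_{0}$ guaranteed by \eqref{sigmacondition} is exactly what makes such a bound available. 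Taking $\xi=\overline{\gamma}/\gamma$, comparing with the $O(\sqrt{D}/x)$ upper bound above and substituting $p^{n}=(x^{2}+D)/m$, one is led to an inequality of the shape $m^{2-\theta}\gg c_{1}\,x^{2(1-\theta)}/p^{\,n_{0}(2-\theta)}$ with explicit $\theta=\theta(\sigma)$ and $c_{1}=c_{1}(D,p)$. Since $n>M$ forces $p^{n}>|\beta|^{500}$ and hence $x$ astronomically larger than $p^{n_{0}}$, and since $2(1-\theta)/(2-\theta)>\sigma$ for $\sigma<0.847$, this gives $m>x^{\sigma}$.

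The step I expect to be the main obstacle is this last one: producing an \emph{effective and completely explicit} lower bound strong enough to beat the height $p^{qn_{0}}$ of $(\beta/\overline{\beta})^{q}$ when $q$ is allowed to be arbitrarily large. A generic linear-forms-in-logarithms estimate is hopeless at the modest threshold $M=250n_{0}$; only the hypergeometric construction at the very small argument $z_{0}$ — which is why the hugeness condition \eqref{sigmacondition} is stated relative to $D^{\eta_{\sigma}}$ — is sharp enough, and making every constant in it completely explicit (together with the non-vanishing of $E_{N}$ at the evaluation points, the class-group and half-integer bookkeeping needed to pin down $\gamma$ when $-D\equiv1\pmod 4$, and the $p=2$ normalisation) is where the real work lies.
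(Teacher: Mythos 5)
Your algebraic reduction is essentially the paper's: producing $\gamma$ (the paper's $\mu$) with $\beta^{q}\gamma-\overline{\beta}^{q}\overline{\gamma}=\pm2\sqrt{-D}$ by comparing ideal factorizations, with the leftover exponent $r<n_{0}$ controlling $N\gamma=p^{r}m$, is exactly Theorem \ref{theoremsection3} (the paper takes $k=5j$ and leftover $l<5n_0$, a cosmetic difference). The problem is the analytic core, which in your sketch is both misdirected and, by your own admission, deferred. First, the object you propose to bound from below, $q\log(\beta/\overline{\beta})-\log(\overline{\gamma}/\gamma)$ with $q$ arbitrary, is not something the hypergeometric construction delivers: Pad\'e approximation gives an approximation measure for a \emph{fixed} target, whereas $(\beta/\overline{\beta})^{q}$ lies in $K=\mathbb{Q}(\sqrt{-D})$ and varies with $q$, so no fixed irrationality measure is applicable, and a Liouville bound is exactly saturated by the identity you derived (the difference \emph{is} $2\sqrt{D}/\sqrt{x^{2}+D}$). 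Second, your choice of auxiliary function and evaluation points does not see the $p$-divisibility at all: Archimedean evaluation of $(1+z)^{1/2}$ at $z_{0}=D/x_{0}^{2}$ and $D/x^{2}$ produces the numbers $p^{n_{0}/2}/x_{0}$ and $p^{n/2}\sqrt{m}/x$, the first of which is rational whenever $n_{0}$ is even, so there is no irrationality to measure; a Beukers--Bauer--Bennett style argument of the kind you are gesturing at must be run $p$-adically against $\sqrt{-D}\in\mathbb{Q}_{p}$, which you never set up. The paper avoids this entirely by a different mechanism: it approximates $(\overline{\beta}/\beta)^{k}$ itself, with the exponent $k=5j$ scaled to $n$, by Pad\'e approximants to $(1-z)^{k}$ of degree $r=4j-g$ evaluated at $z_{0}=\lambda/\beta$, then eliminates $\overline{\beta}^{k}$ between this identity and $\beta^{k}\mu-\overline{\beta}^{k}\overline{\mu}=\pm\lambda$, and uses the non-vanishing of $Q\mu-P\overline{\mu}$ for at least one of the two consecutive approximants $g\in\{0,1\}$ (part 5 of Lemma \ref{lemma2.1}) plus the norm bound $\ge 1$ to force $|\mu|$, hence $m$, to be large.

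Even granting a corrected framework, the theorem is a fully quantitative statement: the constants $\eta_{\sigma}$, $C_{\sigma,p}$ in \eqref{sigmacondition}, the thresholds $|\beta|\ge 90.93$ and $M=250n_{0}$, and the range $\sigma<0.847$ have to be verified against explicit bounds for the approximants (the estimates \eqref{Q upper bound} and \eqref{upper bound for E}, the gcd saving $c_{g}(j)>2.943^{j}$, and the final chain $m\ge|\mu|^{2}/p^{5n_{0}-1}>x^{\sigma}$ using $x>p^{125n_{0}}$). Your proposal replaces all of this with an asserted inequality ``of the shape $m^{2-\theta}\gg c_{1}x^{2(1-\theta)}/p^{n_{0}(2-\theta)}$'' and explicitly names the missing estimate as the expected main obstacle. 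That missing estimate, together with the non-vanishing argument you only allude to, is precisely the substance of the proof, so as it stands the proposal has a genuine gap at its decisive step.
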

  Since $\sigma < 1$, as an immediate corollary one gets the following:
  \begin{corollary} \label{corollary1}
 If $x > p^{250n_0}$ and $x^2+D=p^n.m$, then $m > x^{\sigma}$.
  \end{corollary}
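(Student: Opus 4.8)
The plan is to run the hypergeometric (Pad\'e approximation) method of Bennett, Filaseta and Trifonov, with the huge solution $\beta$ playing the role that the Ramanujan--Nagell solution plays in their treatment of $n^{2}+7=2^{a}m$. Assume, for contradiction, that $x^{2}+D=p^{n}m$ with $n>M=250n_{0}$, $x$ large, and $m\le x^{\sigma}$; the goal is to contradict \eqref{sigmacondition}. First I would pass to $K=\mathbb{Q}(\sqrt{-D})$ (when $p=2$ one works throughout with $\tfrac{x+\sqrt{-D}}{2}$ and $\beta=\tfrac{x_{0}+\sqrt{-D}}{2}$, as in \eqref{sigmacondition}). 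Since $p\nmid D$ and $p\mid x^{2}+D$, $-D$ is a square modulo $p$, so $p$ splits, $p\mathcal{O}_{K}=\mathfrak{p}\bar{\mathfrak{p}}$. From $p\nmid x$ and $p\nmid m$ exactly one of $\mathfrak{p},\bar{\mathfrak{p}}$ divides $\alpha:=x+\sqrt{-D}$; after replacing $x$ by $-x$ we may assume $\mathfrak{p}^{n}\,\|\,\alpha$, so $(\alpha)=\mathfrak{p}^{n}\mathfrak{a}$ with $N\mathfrak{a}=m$ and $\mathfrak{a}$ prime to $p$, and after possibly conjugating $\beta$ we have $(\beta)=\mathfrak{p}^{n_{0}}$. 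Hence
\[
\mu:=\frac{\alpha^{n_{0}}}{\beta^{n}}\in\mathcal{O}_{K},\qquad (\mu)=\mathfrak{a}^{n_{0}},\qquad |\mu|^{2}=N\mu=m^{n_{0}}\le x^{\sigma n_{0}},
\]
and the identity $\mu\beta^{n}=\alpha^{n_{0}}$ is the algebraic backbone: writing $\beta^{n}=U+V\sqrt{-D}$, $\alpha^{n_{0}}=S+T\sqrt{-D}$, $\mu=A+B\sqrt{-D}$ (with $A,B$ integral or half-integral), it unwinds to $AU-DBV=S$, $AV+BU=T$, with $A^{2}+DB^{2}=m^{n_{0}}$ small, $U^{2}+DV^{2}=p^{nn_{0}}$ and $S^{2}+DT^{2}=(p^{n}m)^{n_{0}}$.

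Next I would manufacture a small nonzero ``linear form'' via Pad\'e approximants to the binomial function. Put $z=D/x^{2}$, so that $(1+z)^{1/2}=\sqrt{x^{2}+D}/x=\sqrt{p^{n}m}/x$, and $z$ is genuinely small because $x$ is large. Let $P_{r},Q_{r}$ be the degree-$r$ Pad\'e numerator and denominator of $(1+z)^{1/2}$ arising from the Gauss hypergeometric function, so that $Q_{r}(z)(1+z)^{1/2}-P_{r}(z)=z^{2r+1}R_{r}(z)$, with the classical explicit estimates $|P_{r}(z)|,|Q_{r}(z)|\ll\kappa_{1}^{r}$, $|R_{r}(z)|\ll\kappa_{2}^{r}$ for $|z|$ small, together with the crucial arithmetic fact that $D_{r}P_{r},D_{r}Q_{r}\in\mathbb{Z}[z]$ for some $D_{r}\ll\kappa_{3}^{r}$ (this rests on estimating the denominators of the coefficients $\binom{1/2}{k}$, equivalently on controlling $\operatorname{lcm}(1,3,\dots,2r+1)$). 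Clearing $x^{2r}$ and substituting, and then coupling the resulting relation with $\mu\beta^{n}=\alpha^{n_{0}}$ and its conjugate, I obtain for each $r$ an element $\Lambda_{r}\in D_{r}^{-1}\mathcal{O}_{K}$ — built from $x^{2r}P_{r}(D/x^{2})$, $x^{2r}Q_{r}(D/x^{2})$, the coordinates of $\mu$ and of $\beta^{n}$, and appropriate powers of $x$ and $p$ — such that
\[
|\Lambda_{r}|\ \ll\ \kappa_{2}^{r}\,\frac{D^{2r+1}}{x^{2r+1}}\cdot\bigl(\text{explicit power of }x,p,|\beta|\bigr),\qquad |\bar\Lambda_{r}|\ \ll\ \kappa_{1}^{r}\cdot\bigl(\text{explicit power of }x,p,|\beta|\bigr).
\]
Here the huge solution is precisely what supplies usable data: $\beta^{n}$ contributes the powers of $p$ and $m^{n_{0}}$ is small by hypothesis.

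Finally I would invoke the Liouville-type lower bound and optimise. One first checks $\Lambda_{r}\ne 0$ for a suitable $r$: for $x>D$ the integer $x^{2}+D$ lies strictly between $x^{2}$ and $(x+1)^{2}$, hence is a nonsquare, so $(1+z)^{1/2}$ is irrational, and a standard determinant/gap argument excludes the (at most one per window) degenerate index. Then $D_{r}\Lambda_{r}$ is a nonzero algebraic integer of $K$, so $|\Lambda_{r}|\,|\bar\Lambda_{r}|=|N\Lambda_{r}|\ge D_{r}^{-2}$, whence
\[
D_{r}^{-2}\ \le\ |\Lambda_{r}|\,|\bar\Lambda_{r}|\ \ll\ \kappa^{r}\,\frac{D^{2r+1}}{x^{2r+1}}\cdot\bigl(\text{explicit power of }x,p,|\beta|\bigr).
\]
Using $p^{n}=(x^{2}+D)/m\ge x^{2-\sigma}$ and $n>M=250n_{0}$ to re-express every ``explicit power'' factor in terms of $x$, $|\beta|$ and $n_{0}$, and then choosing $r$ optimally (of size a fixed multiple of a logarithmic ratio), this inequality collapses to an effective bound of the shape $|\beta|^{2}<C_{\sigma,p}^{\,\ast}\,D^{2\eta_{\sigma}}$, i.e.\ to a violation of \eqref{sigmacondition}. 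The exponent $\eta_{\sigma}=\tfrac{7.84-4\sigma}{7.64-9\sigma}$, the restriction $\sigma<0.847$ (which keeps $7.64-9\sigma>0$), the threshold $|\beta|\ge 90.93$, and the constants $C_{\sigma,p}$ (with the power of $2$ distinguishing odd $p$ from $p=2$, traceable to powers of $2$ in $D_{r}$) are exactly what this optimisation forces once the numerical values of $\kappa_{1},\kappa_{2},\kappa_{3}$ and of all lower-order terms are inserted. Since there is essentially no slack, the genuinely laborious parts — and the main obstacle — are (i) making the Pad\'e denominators $D_{r}$ and the remainder small enough, and (ii) carrying the resulting constants honestly through the optimisation. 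Corollary \ref{corollary1} is then immediate: if $x>p^{250n_{0}}$ then either $m>x^{\sigma}$ already, or $m\le x^{\sigma}<x$ forces $p^{n}=(x^{2}+D)/m>x>p^{250n_{0}}$, i.e.\ $n>M$, so Theorem \ref{theorem1} applies.
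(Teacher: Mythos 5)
The only step Corollary \ref{corollary1} actually needs is your closing deduction: if $m\le x^{\sigma}$ then, since $\sigma<1$, $p^{n}=(x^{2}+D)/m>x^{2}/x=x>p^{250n_{0}}$, so $n>M=250n_{0}$ and Theorem \ref{theorem1} (which precedes the corollary and can simply be cited) gives $m>x^{\sigma}$. That two-line argument is correct and is exactly the paper's proof, which derives the corollary from Theorem \ref{theorem1} with the single remark that $\sigma<1$.

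Everything before that, however, is an attempted re-derivation of Theorem \ref{theorem1} itself, and as sketched it would not go through. You propose diagonal Pad\'{e} approximants to the fixed function $(1+z)^{1/2}$ evaluated at $z=D/x^{2}$. Writing $A=D_{r}x^{2r}Q_{r}(D/x^{2})$ and $B=D_{r}x^{2r+1}P_{r}(D/x^{2})$, the Pad\'{e} identity gives $\lvert A\sqrt{p^{n}m}-B\rvert\le D_{r}\kappa_{2}^{r}D^{2r+1}x^{-(2r+1)}$, while the norm (Liouville) lower bound you intend to use gives $\lvert A\sqrt{p^{n}m}-B\rvert\ge c\,D_{r}^{-1}\kappa_{1}^{-r}x^{-(2r+1)}$, because $\sqrt{p^{n}m}\approx x$ and $A,B$ have height about $D_{r}\kappa_{1}^{r}x^{2r+1}$. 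The powers of $x$ cancel identically, leaving $1\ll D_{r}^{2}\kappa_{1}^{r}\kappa_{2}^{r}D^{2r+1}$, which holds for every $r$: there is no contradiction, however large $x$ is and however you optimise $r$. This is the usual Liouville-boundary phenomenon: approximating a fixed square root at a point whose denominator is the unknown $x^{2}$ gains nothing; one gains only when the relevant denominator is a power of a \emph{fixed} base. Your remark that ``$\beta^{n}$ contributes the powers of $p$'' is precisely the missing mechanism — with $z=D/x^{2}$ the quantity $\lvert\beta\rvert$ never enters the error term or the height count, so the final inequality cannot ``collapse to $\lvert\beta\rvert^{2}<C\,D^{2\eta_{\sigma}}$''. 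The paper's proof is structurally different: it writes $n=kn_{0}+l$ with $k=5j$, $l<5n_{0}$, gets $x+\sqrt{-D}=\beta^{k}\mu$ with $\lvert\mu\rvert^{2}=p^{l}m$ (Theorem \ref{theoremsection3}), and approximates $(1-z)^{k}$, with \emph{growing} exponent $k$, at the fixed point $z_{0}=\lambda/\beta$, so all denominators are controlled by the fixed $\beta$; the off-diagonal choice $r=4j-g$ and the huge-solution condition \eqref{sigmacondition} are exactly what make the gain per $j$ positive. (Your alternative normalisation $\mu=\alpha^{n_{0}}/\beta^{n}$ with $N\mu=m^{n_{0}}$ is legitimate, but it does not repair the approximation step.) So read as self-contained, your hypergeometric core has a genuine gap; read as resting on Theorem \ref{theorem1}, only your final two sentences are needed, and they agree with the paper.
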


 The idea of the proof is to approximate $\left(\frac{\bar{\beta}}{\beta}\right)^k$ with rational and algebraic numbers. In the equation $x^2+D=p^n.m$, whenever $m$ is small compared to $ \left \lvert \beta \right \rvert $, we get a good approximation for  $\left(\frac{\bar{\beta}}{\beta}\right)^k$. If the equation $x^2+D=p^n$ has a huge solution, then there exists a good approximation for $k=1$. We use Pad\'{e} approximation method to show that if there exists a huge solution we have a good approximation for $k=1$. However we are unable to get a good approximation ($m$ small) for larger values of $k$. 
 
 We proceed as follows: in section \ref{section2}, we use Pad\'{e} approximations to the function $(1-x)^k$ to generate approximations for $\left( \frac{\bar{\beta}}{\beta} \right)^k$. In section \ref{section3}, using a huge solution of $x^2+D=y^n$, we get an approximation for $\left( \frac{\bar{\beta}}{\beta} \right)^k$. In section \ref{section4}, by combining approximations of sections \ref{section2} and \ref{section3}, we prove Theorem \ref{theorem1}. In section \ref{finalsection} we  consider the equation $x^2+78=101^n.m$ to show that the value $M$ in theorem \ref{theorem1} is actually not huge, and the equation $x^2+D=p^n.m$ can be solved relatively fast  for any $x$ less than $p^M$.

\section{Pad\'{e} approximation} \label{section2}
In this section, we use Pad\'{e} approximations to the function $(1-x)^k$ to get approximations of $ \left(\frac{\bar{\beta}}{{\beta}}\right )^k$. We follow Bennett \cite{B} to produce these approximations.
Let $A$, $B$ and $C$ be positive integers. Define :
\begin{equation} \label{pade polynomials}
\begin{split}
 P_A(z)= \frac{(A+B+C+1)!}{A! B! C!}& \int_{0}^{1} t^A \left(1-t \right)^B (z-t)^C dt, \\
 Q_A(z)=  \frac{(-1)^C(A+B+C+1)!}{A! B! C!}& \int_{0}^{1} t^B \left(1-t \right)^C (1-t+zt)^A dt, \\
 E_A(z)= \frac{(A+B+C+1)!}{A! B! C!}& \int_{0}^{1} t^A \left(1-t \right)^C (1-zt)^B dt .
\end{split}   
\end{equation}
From these definitions we have 
\begin{equation} \label{nonzero}
P_A(z)-(1-z)^{B+C+1} Q_A(z)=z^{A+C+1} E_A(z).
\end{equation}
So these equations provide a set of approximations to $(1-z)^{B+C+1}$.
Expanding the formulas above, we have the following lemma, which is lemma 1 in \cite{B}.
\begin{lemma} \label{second form}
With the same conditions as above, the functions $P_A$, $Q_A$ and $E_A$ satisfy :
\begin{equation}
\begin{split}
 P_A(z)= \sum_{i=0}^{C} \binom{A+B+C+1}{i} \binom{A+C-i}{A}(-z)^i,\\
 Q_A(z)= (-1)^{C}\sum_{i=0}^{A} \binom{A+C-i}{C} \binom{B+i}{i}(z)^i,\\
 E_A(z)= \sum_{i=0}^{B} \binom{A+i}{i} \binom{A+B+C+1}{A+C+i+1}(-z)^i.
\end{split} 
 \end{equation}
\end{lemma}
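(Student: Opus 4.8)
The plan is a direct computation: in each of the three integrals in \eqref{pade polynomials} expand the one remaining polynomial factor by the binomial theorem, interchange the (finite) sum with the integral, and evaluate each resulting integral by the Euler--Beta formula
$$\int_0^1 t^a(1-t)^b\,dt=\frac{a!\,b!}{(a+b+1)!}\qquad(a,b\in\mathbb{Z}_{\geq 0}).$$
After multiplying through by the normalizing constant $\frac{(A+B+C+1)!}{A!\,B!\,C!}$, the only work left is to regroup the resulting quotients of factorials into the binomial coefficients displayed in the statement.

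Concretely, for $P_A$ I would write $(z-t)^C=\sum_{i}\binom{C}{i}(-t)^i z^{C-i}$, so that $P_A(z)=\frac{(A+B+C+1)!}{A!\,B!\,C!}\sum_{i=0}^{C}\binom{C}{i}(-1)^i z^{C-i}\int_0^1 t^{A+i}(1-t)^B\,dt$; the Beta integral equals $\frac{(A+i)!\,B!}{(A+B+i+1)!}$, the factors $B!$ cancel, and after the substitution $i\mapsto C-i$ each coefficient of $z^i$ collapses (using $A+B+C+1-i$ in the denominator) to $\binom{A+B+C+1}{i}\binom{A+C-i}{A}$ times a sign, which is the claimed formula — one orients the degree-$C$ factor, writing it as $(t-z)^C$ rather than $(z-t)^C$, so that the sign comes out exactly as $(-z)^i$. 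For $Q_A$ I would use $(1-t+zt)^A=\big((1-t)+zt\big)^A=\sum_i\binom{A}{i}z^i t^i(1-t)^{A-i}$, whose term-by-term integral is $\frac{(B+i)!\,(A+C-i)!}{(A+B+C+1)!}$; here the $(A+B+C+1)!$ cancels against the normalizing constant, the prefactor $(-1)^C$ survives untouched, and regrouping gives $\binom{B+i}{i}\binom{A+C-i}{C}$. For $E_A$ I would expand $(1-zt)^B=\sum_i\binom{B}{i}(-z)^i t^i$, integrate to get $\frac{(A+i)!\,C!}{(A+C+i+1)!}$, and regroup the factorials as $\frac{(A+i)!}{A!\,i!}\cdot\frac{(A+B+C+1)!}{(B-i)!\,(A+C+i+1)!}=\binom{A+i}{i}\binom{A+B+C+1}{A+C+i+1}$.

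The main (and essentially only) obstacle is bookkeeping: one must keep careful track of the signs produced by the factors $(-t)^C$, $(-zt)^B$ and so on, and verify that the various factorial quotients genuinely assemble into the binomial coefficients as written, in particular that the upper and lower entries remain compatible after reindexing. There is no analytic subtlety, since every sum here is finite and the interchange with the integral is immediate; once the three closed forms are in hand, the identity \eqref{nonzero} can either be read off from them or, more cleanly, checked directly from the integral representations by the same expand-and-integrate routine.
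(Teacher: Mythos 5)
Your proposal is correct and follows essentially the same route the paper intends: Lemma \ref{second form} is simply quoted as Lemma 1 of Bennett's paper and is obtained exactly by your expand--interchange--Beta-integral computation, with the factorial regroupings you describe. Your parenthetical sign remark on $P_A$ is in fact warranted: expanding the integral in \eqref{pade polynomials} literally, with the factor $(z-t)^C$, yields $(-1)^C$ times the displayed sum, so the stated closed form corresponds to the $(t-z)^C$ orientation; this discrepancy is harmless in the sequel, where $A=C=r$ is taken and only absolute values (together with the explicit $(-1)^r$ in Lemma \ref{lemma2.1}) enter.
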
Moreover, we have $P_A(z)Q_{A+1}(z)-Q_A(z)P_{A+1}(z)= cz^{A+C+1}$ \cite{B}, which means that we can get distinct approximations for $(1-z)^{B+C+1}$.
For our purpose, namely for finding approximations to $(\frac{\bar{\beta}}{\beta})^k$, we take $A=C=r$ and $B=k-r-1$. Note that by taking $A=C$, we get a diagonal approximation and from our choice of $B$, we get the desired approximation of $(1-z)^k$. With this choice of parameters, equations \eqref{second form} can be rewriten as the following lemma (see \cite{BFT1}): 
\begin{lemma} \label{lemma2.1}
For positive integers $k$ and $r$ with $k>r$, there exist polynomials $P_r(z)$, $Q_r(z)$ and $E_r(z)$ with integer coefficients such that 
\begin{enumerate}
\item $Q_r(z)=  \frac{(k+r)!}{(k-r-1)! r! r!} \int_{0}^{1} t^{k-r-1} \left(1-t \right)^r (1-t+zt)^r dt, $\\

\item $E_r(z)=  \frac{(k+r)!}{(k-r-1)! r! r!} \int_{0}^{1} t^{r} \left(1-t \right)^r (1-tz)^{k-r-1} dt, $\\

\item $ \deg P_r=\deg Q_r=r \quad \text{and} \quad \deg E_r=k-r-1, $\\

\item $P_r(z)-(1-z)^{k} Q_r(z)=(-1)^r z^{2r+1} E_A(z),$\\

\item $P_r(z) Q_{r+1}(z)-Q_r(z)P_{r+1}(z)=c z^{2r+1} $ for some non-zero constant $c$.
\end{enumerate}

\end{lemma}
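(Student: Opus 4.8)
The plan is to read off Lemma \ref{lemma2.1} as the diagonal specialization $A=C=r$, $B=k-r-1$ of the general construction recorded in \eqref{pade polynomials}, Lemma \ref{second form}, and \eqref{nonzero}. With these values one has $A+B+C+1=k+r$, $B+C+1=k$, and $A+C+1=2r+1$, so the three integrals in \eqref{pade polynomials} already take the shapes appearing in parts (1), (2), and (4); the only discrepancy is the sign $(-1)^{C}=(-1)^{r}$ attached to $Q_A$ and the corresponding sign wanted in (4). I would therefore simply \emph{define} $Q_r:=(-1)^{r}Q_A$, $P_r:=(-1)^{r}P_A$, and $E_r:=E_A$, all evaluated at $A=C=r$, $B=k-r-1$. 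The two factors $(-1)^{r}$ in $Q_r$ then cancel, giving precisely the integral in (1), while (2) is $E_A$ verbatim; beyond tracking factorials there is nothing to check here.

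For (3), Lemma \ref{second form} exhibits $P_A$, $Q_A$, $E_A$ as integer combinations of products of binomial coefficients, so $P_r,Q_r,E_r\in\mathbb{Z}[z]$; their degrees and the non-vanishing of the top coefficients are then read directly from those three finite sums, the coefficient of $z^{r}$ in $Q_r$ being $\binom{k-1}{r}$ (nonzero exactly because $k>r$), the coefficient of $z^{r}$ in $P_r$ being $\binom{k+r}{r}\neq0$, and the coefficient of $z^{k-r-1}$ in $E_r$ being $\pm\binom{k-1}{k-r-1}\neq0$ (note $k-r-1\geq0$ as $k>r$). For (4), substituting into \eqref{nonzero} gives $P_A(z)-(1-z)^{k}Q_A(z)=z^{2r+1}E_A(z)$; writing $P_A=(-1)^{r}P_r$, $Q_A=(-1)^{r}Q_r$ and multiplying through by $(-1)^{r}$ yields $P_r(z)-(1-z)^{k}Q_r(z)=(-1)^{r}z^{2r+1}E_r(z)$, which is (4).

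It remains to prove (5), for which one needs $k>r+1$ so that $Q_{r+1}$ and $E_{r+1}$ are defined. One option is to specialize the cross-product relation $P_AQ_{A+1}-Q_AP_{A+1}=cz^{A+C+1}$ quoted from \cite{B}, but I would prefer the following self-contained argument from (4). Using $P_r=(1-z)^{k}Q_r+(-1)^{r}z^{2r+1}E_r$ together with the same identity at index $r+1$, the terms $(1-z)^{k}Q_rQ_{r+1}$ cancel and one obtains
\[
P_r(z)Q_{r+1}(z)-Q_r(z)P_{r+1}(z)=(-1)^{r}z^{2r+1}\bigl(E_r(z)Q_{r+1}(z)+z^{2}Q_r(z)E_{r+1}(z)\bigr).
\]
By part (3) the left-hand side has degree at most $r+(r+1)=2r+1$, so the polynomial $g(z):=E_r(z)Q_{r+1}(z)+z^{2}Q_r(z)E_{r+1}(z)$ must be constant, equal to $g(0)=E_r(0)Q_{r+1}(0)$, whence $c=(-1)^{r}E_r(0)Q_{r+1}(0)$. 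Finally $E_r(0)$ and $Q_{r+1}(0)$ are strictly positive, since evaluating the integrals of (1) and (2) at $z=0$ leaves positive integrands (a Beta-integral computation in fact gives $E_r(0)=\tfrac{(k+r)!}{(k-r-1)!(2r+1)!}$ and $Q_{r+1}(0)=\binom{2r+2}{r+1}$), so $c\neq0$. The only steps that call for genuine care are this sign and normalization bookkeeping and the non-vanishing of $c$; everything else is direct substitution into results already established in this section, which is why the statement can be attributed to \cite{BFT1}.
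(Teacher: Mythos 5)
Your proposal is correct and follows essentially the route the paper intends: Lemma \ref{lemma2.1} is just the specialization $A=C=r$, $B=k-r-1$ of \eqref{pade polynomials}, Lemma \ref{second form} and \eqref{nonzero}, and your sign normalization $P_r=(-1)^rP_A$, $Q_r=(-1)^rQ_A$, $E_r=E_A$ is exactly what is needed to reconcile the $(-1)^C$ in $Q_A$ with parts (1) and (4); it also matches the explicit formulas the paper later writes for $k=5j$, $r=4j-g$, where $(-1)^r=(-1)^g$. The one place you genuinely go beyond the paper is part (5): the paper simply quotes the cross-product identity $P_AQ_{A+1}-Q_AP_{A+1}=cz^{A+C+1}$ from \cite{B}, whereas you derive it from (4) and the degree bounds, and you pin down the constant as $c=(-1)^rE_r(0)Q_{r+1}(0)$ with $E_r(0)=\frac{(k+r)!}{(k-r-1)!(2r+1)!}$ and $Q_{r+1}(0)=\binom{2r+2}{r+1}$, which makes the non-vanishing of $c$ (the fact actually used in Section \ref{section4}) explicit rather than cited. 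Your remark that part (5) tacitly needs $k>r+1$ so that the index-$(r+1)$ polynomials exist is a fair observation about the statement; it is harmless here since the application takes $k=5j$, $r=4j-g$ with $j\geq 50$. The leading-coefficient computations $\binom{k+r}{r}$, $\binom{k-1}{r}$, $\pm\binom{k-1}{k-r-1}$ for part (3) are also correct.
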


An important feature of this approximations is the ratio $\frac{r}{k} $. Smaller ratios give better results, but only for Larger values of $\left \lvert \beta \right \rvert$. To get more general results, we take
\begin{equation} \label{k,r value}
k=5j, \quad  r= 4j-g,
\end{equation}   
where $g \in \{0,1 \} $, 
With our choice of $k$ and $r$ we can rewrite  $Q_r(z)$, $P_r(z)$ and $E_r(z)$ of Lemma \ref{second form} as follows: 
$$ P_r(z)= (-1)^{g}\sum_{i=0}^{4j-g} \binom{9j-g}{i} \binom{8j-2 g -i}{4j- g}(-z)^i,$$
$$ Q_r(z)= \sum_{i=0}^{4j-g} \binom{8j-2g-i}{4j-g} \binom{j+g-1+i}{i}(z)^i,$$
$$ E_r(z)= \sum_{i=0}^{j+g} \binom{4j-g+i}{i} \binom{9j-g}{8j-2g+i+1} (-z)^i .$$
As usual to use Pad\'{e} approximations, we need explicit bounds on $Q_r(z_0)$ and $E_r(z_0)$.  As noted before $P_r$, $Q_r$ and $E_r$ are all polynomials with integer coefficients. By dividing  these polynomials by the GCD of their coefficients, we get polynomials with integral coefficients and smaller heights and consequently, a better approximation. Let's define 
$$c_{g}(j)= \gcd_{i \in \{ 0,1, \cdots,4j-g \}}  \binom{8j-2g-i}{4j-g} \binom{j+g-1+i}{i}. $$
Then $p^{*}_r(z)=c_{g}(j)^{-1} P_r(z),Q_{r}^{*}(z)=c_{}(j)^{-1} Q_r(z) $ and $E_r^{*}(z)=c_{g}(j)^{-1} E_r(z)$ are all polynomials with integer coefficients.Obviously, part 4 and 5 of Lemma \ref{lemma2.1}, namely, 

 $$P_r^{*}(z)-(1-z)^{k} Q_r^{*}(z)=(-1)^r z^{2r+1} E_A^{*}(z)$$
and 
$$ P_r(z)g^{*} Q_{r+1}^{*}(z)-Q_r^{*}(z)P_{r+1}^{*}(z)g=c z^{2r+1}$$hold for the new polynomials for some non-zero constant $c$. Form \cite{BFT2} we have the following result to bound $c_{g}(j)$.
\begin{lemma}
For $j >50$ and $g \in  \{0,1 \}$, we have 
 $$ c_{g}(j) > 2.943^j.$$
\end{lemma}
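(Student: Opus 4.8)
The plan is to prove the lower bound $c_g(j) > 2.943^j$ by estimating, from below, the $p$-adic valuation of $c_g(j) = \gcd_i \binom{8j-2g-i}{4j-g}\binom{j+g-1+i}{i}$ at every prime $p$, and then exponentiating. Write $c_g(j) = \prod_p p^{v_p}$ where $v_p = \min_i \bigl(v_p\binom{8j-2g-i}{4j-g} + v_p\binom{j+g-1+i}{i}\bigr)$. Using Kummer's theorem, $v_p$ of a binomial coefficient $\binom{a+b}{a}$ equals the number of carries when adding $a$ and $b$ in base $p$; in particular $v_p\binom{8j-2g-i}{4j-g} = v_p\binom{(4j-g)+(4j-g-i)}{4j-g}$ counts carries in adding $4j-g$ and $4j-g-i$, and $v_p\binom{j+g-1+i}{i}$ counts carries in adding $i$ and $j+g-1$. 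The key point is that for a prime power $p^e \le 9j$, one can show that for \emph{every} residue class of $i$ one of the two binomials picks up a factor of $p$, so summing the guaranteed contributions over all prime powers up to roughly $9j$ gives an unconditional lower bound on $v_p$ uniformly in $i$.

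Concretely, I would follow the Chudnovsky/Bennett–Filaseta–Trifonov template: show that for each prime $p$ and each integer $e \ge 1$ with $p^e \le$ (an appropriate linear bound in $j$), the quantity $v_p\binom{8j-2g-i}{4j-g}+v_p\binom{j+g-1+i}{i}$ is at least the indicator that $i$ lies outside a short interval modulo $p^e$ determined by the base-$p$ digits of $4j-g$ and $j+g-1$. Taking the minimum over $i$ removes the "bad" residues but, crucially, there is a single $e$ (depending on $p$) for which no residue is simultaneously bad for all relevant prime powers, so $v_p \ge \sum_{e \le \log_p(\kappa j)} 1$ for a constant $\kappa$ close to $9$ (or $8$), minus a bounded error. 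Summing $v_p \log p$ over $p$ and invoking an effective form of the prime number theorem (Rosser–Schoenfeld type bounds on $\sum_{p^e \le x}\log p = \psi(x)$ and $\theta(x)$) yields $\log c_g(j) \ge (\text{const})\, j - o(j)$; one then checks that the constant exceeds $\log 2.943 \approx 1.0796$ and that the $o(j)$ and the "$-g$" perturbations are absorbed for $j > 50$. This is exactly the kind of estimate carried out in \cite{BFT2}, so I would cite that computation and only verify that our slightly different parameters $k=5j$, $r=4j-g$ (hence the binomials $\binom{8j-2g-i}{4j-g}$ and $\binom{j+g-1+i}{i}$) produce the same leading constant.

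The main obstacle is the bookkeeping at small primes and the boundary $j > 50$: the asymptotic lower bound on $\frac{1}{j}\log c_g(j)$ exceeds $\log 2.943$ only with a specific error term, so one must control the contribution of primes $p$ with $p^e$ close to the cutoff and the rounding in $v_p$ (the $\lfloor\cdot\rfloor$'s coming from Legendre's formula) precisely enough to guarantee strict inequality already at $j=51$ rather than merely for $j$ sufficiently large. I would handle this by splitting primes into "small" ($p \le P_0$ for an explicit $P_0$, where one uses the exact Legendre-formula expression and a direct estimate) and "large" ($P_0 < p \le \kappa j$, where the carry-counting gives the clean bound and one sums via effective Chebyshev estimates), choosing $P_0$ so that the two pieces together clear $2.943^j$ with room to spare; the monotonicity of the relevant sums in $j$ then lets one reduce the "for all $j>50$" claim to a finite check. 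Since the excerpt explicitly attributes this bound to \cite{BFT2} with the same shape of binomials, the honest write-up is to reduce our statement to theirs by matching parameters, noting that replacing $k = 5j,\ r = 4j$ by $r = 4j-1$ changes the binomials only by the stated amounts and does not affect the leading constant $2.943$.
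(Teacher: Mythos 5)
Your bottom line is exactly what the paper does: its entire proof of this lemma is the one-line observation that it is a special case of Proposition 5.1 of \cite{BFT2} with $d=4$, $c=5$ and $m=j$, so the reduction-by-citation in your final paragraph is the intended and sufficient argument; in particular the paper gives no separate treatment of $g=1$ or of the passage from $r=4j$ to $r=4j-1$ --- that is absorbed by the cited proposition, not by an extra constant-matching step.

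That said, the from-scratch sketch occupying most of your proposal should not be offered as an alternative proof as written, because its central uniformity claim is false. You assert that for every prime power $p^e\le \kappa j$ and \emph{every} residue class of $i$ one of the two binomials picks up a factor of $p$, hence $v_p(c_g(j))\ge \log_p(\kappa j)-O(1)$ for all $p$. Take $i=0$: the second factor is $\binom{j+g-1}{0}=1$, so $v_p(c_g(j))\le v_p\binom{8j-2g}{4j-g}$, and since $\binom{8j-2g}{4j-g}$ is a central binomial coefficient, Kummer's theorem (the classical Erd\H{o}s observation) gives $v_p\binom{8j-2g}{4j-g}=0$ for every prime $p$ with $(8j-2g)/3<p\le 4j-g$; such primes contribute nothing to the gcd. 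The argument in \cite{BFT2} does not sum over all primes up to a linear bound: it isolates, via fractional-part (digit) conditions on quantities like $\{4j/p\}$ and $\{j/p\}$, explicit ranges of primes dividing \emph{every} term of the gcd, and only the logarithmic weight of those ranges, estimated with effective Chebyshev-type bounds, yields the constant $2.943$ --- which is precisely why that constant is far smaller than what your claimed lower bound on $v_p$ would produce. So either cite the proposition, as the paper does, or carry out that interval analysis honestly; the hybrid in your sketch has a genuine gap.
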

\proof  This is a special case of Proposition 5.1 in \cite{BFT2} with $d=4$ $c=5$ and $m=j$. 
\subsection{Explicit bounds for approximations}
From now on, we take :
$$ z_0=1-\frac{\bar{\beta}}{\beta}=\frac{\lambda}{\beta},$$
where $\lambda = 2 \sqrt{-D}$ if $p$ is an odd prime, and $\lambda= \sqrt{-D}$ if $p=2$.

 To apply Pad\'{e} approximation to $\left( \frac{\bar{\beta}}{\beta}\right)^k$ we need explicit bounds on $Q_r(z_0)$ and $E_r(z_0)$. 

\subsubsection{Upper bound for $\left \lvert Q_r^{*}(z_0) \right \rvert$  }

For $g=1$ one can get much stronger upper bound for $ Q_r^{*}(z_0)$. Therefore, to determine an upper bound for $\left \lvert Q_r^{*}(z_0) \right \rvert$ we assume $g=0$

First, we recall  a lemma \cite{B} to give an upper bound for $ \frac{(k+r)!}{(k-r-1)! r! r!}.$ 
\begin{lemma}
For positive integers $A$, $B$ and $C$,
$$ \frac{(A+B+C!)}{A! B! C!} < \frac{1}{2 \pi} \sqrt{\frac{A+B+C}{ABC}} \frac{(A+B+C)^{A+B+C}}{A^A B^B C^C}.$$ 
\end{lemma}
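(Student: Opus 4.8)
The plan is to establish the Stirling-type estimate for the multinomial coefficient
$$\frac{(A+B+C)!}{A!\,B!\,C!}$$
by appealing to the classical Stirling bound with explicit error term. Recall that for every positive integer $N$ one has
$$\sqrt{2\pi N}\left(\frac{N}{e}\right)^N < N! < \sqrt{2\pi N}\left(\frac{N}{e}\right)^N e^{\frac{1}{12N}}.$$
First I would write $N=A+B+C$ and apply the \emph{upper} bound for $N!$ in the numerator and the \emph{lower} bound for each of $A!$, $B!$, $C!$ in the denominator. The powers of $e$ cancel exactly, since $e^{-N}$ in the numerator matches $e^{-A}e^{-B}e^{-C}$ in the denominator, and one is left with
$$\frac{(A+B+C)!}{A!\,B!\,C!} < \frac{\sqrt{2\pi N}}{\sqrt{2\pi A}\,\sqrt{2\pi B}\,\sqrt{2\pi C}}\cdot\frac{N^N}{A^A B^B C^C}\cdot e^{\frac{1}{12N}}.$$
The radical prefactor simplifies to $\frac{1}{2\pi}\sqrt{\frac{N}{ABC}}$, which is precisely the prefactor claimed in the lemma, so it remains only to absorb or discard the factor $e^{1/(12N)}$.

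The one genuine subtlety is that the stated inequality has no $e^{1/(12N)}$ factor, so I must argue that it can be dropped. This is where I would be slightly careful: dropping $e^{1/(12N)}$ weakens an upper bound only if the rest of the expression is large enough to compensate, which is false in general — for instance when $A=B=C=1$ the left side is $6$ while $\frac{1}{2\pi}\sqrt{3}\cdot 27\approx 7.44$, and indeed $7.44 > 6$, so the bound does hold even without the exponential correction, but this needs a uniform argument. The cleanest route is to observe that the inequality as stated is the one proved in Bennett \cite{B} (it is quoted verbatim from there), so strictly speaking one may simply cite it; if a self-contained proof is wanted, I would note that for the degenerate cases where one of $A,B,C$ is very small the ratio $\sqrt{N/(ABC)}$ is bounded away from $0$ by a constant larger than $2\pi\cdot e^{1/(12N)}/(\text{something})$ only after more care, so the honest statement is that one first proves the slightly weaker bound with the $e^{1/(12N)}$ factor and then checks, using $e^{1/(12N)}\le e^{1/36}<1.029$ for $N\ge 3$ together with a direct verification in the finitely many cases $N\le 2$, that the extra factor may be removed.

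In practice, however, the intended use of this lemma in the paper (bounding $\frac{(k+r)!}{(k-r-1)!\,r!\,r!}$ with $k=5j$, $r=4j-g$, so $A=C=r$, $B=k-r-1$ are all large) means only the large-$N$ regime matters, and there the removal of $e^{1/(12N)}$ is harmless up to the slack already built into the subsequent estimates. So the main step is the Stirling substitution and the exact cancellation of the $e^{-N}$ factors; the only obstacle is the cosmetic one of justifying the absence of the $e^{1/(12N)}$ correction term, which I would handle by citing \cite{B} directly rather than reproving it, since the paper already attributes the lemma there. I expect the write-up to be three or four lines: state Stirling with error term, substitute, cancel, simplify the radical, cite \cite{B} for the removal of the exponential factor.
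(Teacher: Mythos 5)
You are right that the main step is the Stirling substitution with exact cancellation of the $e^{-N}$ factors, and your fallback of simply citing \cite{B} in fact coincides with what the paper does: the paper offers no proof of this lemma at all, it merely recalls it from Bennett. So if the citation is accepted as the proof, there is nothing to argue. The problem is your attempted self-contained completion, which has a genuine gap at exactly the point you flag. Knowing that the weaker bound carries only a factor $e^{1/(12N)}\le e^{1/36}<1.029$ does not let you delete that factor: an upper bound with a multiplicative factor larger than $1$ never implies the same upper bound without it, however close to $1$ the factor is, and no finite case check repairs this (note also that $A,B,C\ge 1$ forces $N=A+B+C\ge 3$, so your ``cases $N\le 2$'' are vacuous). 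Your appeal to ``slack in the subsequent estimates'' would at best prove a weaker statement adequate for the application, not the lemma as stated.

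The correct way to remove the factor --- and the mechanism the paper itself displays when it proves the analogous two-variable lemma later in the same section --- is to use the two-sided explicit Stirling (Robbins) bounds asymmetrically: the upper estimate $N!<\sqrt{2\pi N}\,(N/e)^N e^{1/(12N)}$ for the numerator and the sharper lower estimate $n!>\sqrt{2\pi n}\,(n/e)^n e^{1/(12n+1)}$ (or the variant with $12n+1/4$ used in the paper) for each of $A!$, $B!$, $C!$. The net exponential factor is then $\exp\bigl(\tfrac{1}{12N}-\tfrac{1}{12A+1}-\tfrac{1}{12B+1}-\tfrac{1}{12C+1}\bigr)$, which is strictly less than $1$ because $N\ge A+1$ already gives $\tfrac{1}{12N}<\tfrac{1}{12A+1}$; this is precisely the role of the inequality $\tfrac{1}{12(A+B)}-\tfrac{1}{12A+1/4}-\tfrac{1}{12B+1/4}<0$ that the paper invokes for its two-variable version. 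With that substitution your argument closes cleanly; without it (or without resting entirely on the citation to \cite{B}) the removal of the $e^{1/(12N)}$ term is unjustified.
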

From this lemma with suitable choice of $A$, $B$ and $C$ we have 
 $$ \frac{(k+r)!}{(k-r-1)! r! r!}= \frac{(9j)!}{(j-1)! ((4j)!)^2} < \frac{3}{8 \pi} \left( 3^{18} 2^{-16} \right ) ^j .$$

For $0<t <1 $, since we have $1-z_0 = \frac{\bar{\beta}}{\beta} $, we get :
$$ \left \lvert 1-(1-z_0)t\right \rvert ^2 = 1- 2bt+t^2,$$
where $b= 1- \frac{2D}{\left \lvert \beta \right \rvert ^2}$. From condition \eqref{sigmacondition}, since $D > 12 $, we have $b$ is a positive number  between 0.953 and 1. With smaller value of $b$ we get a bigger value for $ \left\lbrace (1-z)^4 z( 1-2bz+z^2)^2 \right \rbrace $ . It means:

$$ \max_{t \in [0,1]} \left\lbrace (1-t)^4 t( 1-2bt+t^2)^2 \right \rbrace \leq 0.044479. $$
Also, under the same condition we get  
$$ \int_{0}^{1} (1-t)^4 ( 1-2bt+t^2)^2 dt< 0.114552. $$
Therefore, 
\begin{equation*}
\begin{split}
  &\left \lvert   \frac{(k+r)!}{(k-r-1)! r! r!}  \int_{0}^{1} t^{k-r-1}  \left(1-t \right)^r (1-t+z_0t)^r dt \right \rvert \\
                   \leq & \frac{3}{8 \pi} \left( 3^{18} 2^{-16} \right ) ^j  \int_{0}^{1} \left((1-t)^4 t ( 1-2bt+t^2)^2 \right)^{j-1} (1-t)^4 ( 1-2bt+t^2)^2 dt  \\ 
                   \leq &  \frac{3}{8 \pi} \left( 3^{18} 2^{-16} \right ) ^j 0.044479^{j-1} \int_{0}^{1} (1-t)^4 ( 1-2bt+t^2)^2 dt \\
                   \leq & 0.308 \times (262.9407)^j.
                   \end{split}
\end{equation*}
  Thus, it follows 
  \begin{equation} \label{Q upper bound}
    \left \lvert Q_{r}^{*} (z_0) \right \rvert < 0.308 \times (89.3445)^j.
   \end{equation}
   
   \subsubsection{An Upper bound for $\left \lvert E_r^{*}(z_0) \right \rvert$ } 
    
  For any $t \in [0,1] $ we have 
  $$ \left \lvert 1-tz_0 \right \rvert ^2 =1 - \frac{\left \lvert \lambda \right \rvert}{\left \lvert \beta \right \rvert} t(1-t) \leq 1.$$Therefore we get 
  \begin{equation*}
  \begin{split}
   \left \lvert E_r(z_0) \right \rvert= \left \lvert \frac{(k+r)!}{(k-r-1)! r! r!} \int_{0}^{1} t^{r} \left(1-t \right)^r  (1-tz_0)^{k-r-1}  dt \right \rvert \\
     \leq  \frac{(k+r)!}{(k-r-1)! r! r!} \int_{0}^1 (1-t)^rt^r dt .
   \end{split}
   \end{equation*} 
   On the other hand, 
   $$\int_{0}^1 (1-t)^rt^r  = \frac{r! r!}{(2r+1)!}. $$
 Therefore,  
 $$  \left \lvert E_r(z_0) \right \rvert= \frac{(k+r)!}{(k-r-1)!(2r+1)!}. $$
 Motivated by the last equation, we prove the following lemma.
 \begin{lemma}
Let $A$ and $B$ be positive integers. Then
 $$ \frac{(A+B)!}{A! B!}< \frac{1}{\sqrt{2 \pi}} \sqrt{\frac{A+B}{AB}} \frac{(A+B)^{A+B}}{A^A B^B}. $$  
 \end{lemma}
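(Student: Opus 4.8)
The plan is to derive this inequality from a two-sided form of Stirling's approximation, in exactly the same spirit as the earlier display bounding $(A+B+C)!/(A!B!C!)$ (Bennett's lemma), which is proved the same way. The key input is Robbins' refinement of Stirling's formula: for every positive integer $n$,
$$\sqrt{2 \pi n}\,\left(\frac{n}{e}\right)^{n} e^{\frac{1}{12n+1}} \;<\; n! \;<\; \sqrt{2 \pi n}\,\left(\frac{n}{e}\right)^{n} e^{\frac{1}{12n}}.$$

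First I would apply the upper bound to the numerator $(A+B)!$ and the lower bound to each of the denominator factors $A!$ and $B!$. The powers $e^{-n}$ cancel identically, since $-(A+B)=-A-B$, and the powers $n^n$ reassemble into $\frac{(A+B)^{A+B}}{A^A B^B}$. The square-root prefactors contribute $\frac{\sqrt{2\pi(A+B)}}{\sqrt{2\pi A}\,\sqrt{2\pi B}}=\frac{1}{\sqrt{2\pi}}\sqrt{\frac{A+B}{AB}}$, which is precisely the claimed constant. What remains after this bookkeeping is the residual exponential factor
$$\exp\!\left(\frac{1}{12(A+B)}-\frac{1}{12A+1}-\frac{1}{12B+1}\right),$$
and the entire content of the proof is to check that this factor is strictly less than $1$, i.e. that the exponent is negative.

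For that last step it suffices to note that $12A+1 < 12(A+B)$ whenever $B\ge 1$ (since then $1<12B$), so that $\frac{1}{12A+1} > \frac{1}{12(A+B)}$ already on its own; adding the positive term $\frac{1}{12B+1}$ only makes the sum larger, so the exponent is negative and the lemma follows, with a strict inequality as stated. The same observation, applied with three summands, gives $\frac{1}{12A+1}+\frac{1}{12B+1}+\frac{1}{12C+1}>\frac{1}{12(A+B+C)}$ and thereby the earlier three-variable estimate as well. I do not expect a genuine obstacle here; the only point requiring care is that one must use the two-sided Robbins bounds rather than the cruder $\sqrt{2\pi n}(n/e)^n < n! < \sqrt{2\pi n}(n/e)^n e^{1/(12n)}$, since the latter choice would leave an uncancelled factor $e^{1/(12(A+B))}>1$ and only yield the inequality with that spurious factor present. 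Using Robbins' sharper lower bound $e^{1/(12n+1)}$ in the denominator is exactly what absorbs it.
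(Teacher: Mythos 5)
Your proof is correct and follows essentially the same route as the paper: both apply the explicit (Robbins-type) two-sided Stirling bounds and reduce the lemma to the negativity of $\frac{1}{12(A+B)}-\frac{1}{12A+c}-\frac{1}{12B+c}$, which holds since $12B>1$; the paper merely uses the correction constant $c=1/4$ where you use Robbins' $c=1$, a cosmetic difference that does not affect the argument.
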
  
 \begin{proof}
 The result follows from the explicit version of Stirling's formula by considering the following inequality for positive integers $A$ and $B$:
  $$ \frac{1}{12(A+B)}- \frac{1}{12A+1/4} - \frac{1}{12B+1/4} <0. $$
 \end{proof}  
 From this lemma one gets a weaker bound for $\left \lvert E_r(z_0) \right \rvert $ when $g=1$. Indeed, we have 
  $$ \left \lvert E_r(z_0) \right \rvert= \frac{(9j-1)!}{(j)! (8j-1)!} < \frac{0.377}{\sqrt{j}} \left( \frac{9^9}{8^8} \right)^j. $$
  So we have 
  \begin{equation} \label{upper bound for E}
  \left \lvert E_r^{*}(z_0) \right \rvert < \frac{0.377}{j}\times (7.847 )^j.   
 \end{equation}
  To summarize we have:
  \begin{equation} \label{first approximation}
   \beta^k P_{r}^{*}(z_0) - \bar{\beta}^k Q_r^{*}(z_0)= \beta^{k-2r+1} \lambda ^{2r+1}  E_r^{*}(z_0), 
  \end{equation}
   with explicit exponential bounds on  $P_r^{*}(z_0)$ and $ E_r^{*}(z_0)$ in terms of $r$ as desired.

\section{Second approximation - Algebraic set up}  \label{section3}
We assumed that the equation $x^2+D=p^n$ has a solution $(x_0,n_0)$. Let the equation $x^2+D=p^n.m$ has a solution with $n \gg n_0$. Then, working in the ideals of ring of algebraic integers of the number field $\mathbb{Q} \sqrt{-D}$, we will relate this solution to a given solution $(x_0,n_0)$. This enables us to find an approximation for $\left(\frac{\bar{\beta}}{\beta}\right)^k$. 

 \begin{theorem} \label{theoremsection3}
 Consider $x_0^2+D=p^{n_0}$, and $x^2+D=p^n.m$ with $n > 5n_0$. Then there exist a rational integer $j$ and an algebraic integer $\mu$ in the number field $\mathbb{Q} \sqrt{-D}$ such that $\beta^k \mu -\bar{\beta}^k \bar{\mu} = \pm \lambda $, where $k=5j$.
 \end{theorem}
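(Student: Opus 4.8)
The plan is to run an ideal‑theoretic argument in the ring of integers $\mathcal{O}$ of $K=\mathbb{Q}(\sqrt{-D})$, producing $\mu$ essentially as $\alpha/\beta^{k}$, where $\alpha$ is the algebraic integer attached to the solution $x^{2}+D=p^{n}m$. \emph{Step 1 (local structure of $p$).} Since $x_0^{2}+D=p^{n_0}$ with $p\nmid D$, for odd $p$ the residue $-D$ is a nonzero square mod $p$, so $p$ splits, $p\mathcal{O}=\mathfrak{p}\,\overline{\mathfrak{p}}$ with $\mathfrak{p}\ne\overline{\mathfrak{p}}$; for $p=2$ one first notes that $D>12$ forces $x_0$ odd and $n_0\ge 4$, hence $D\equiv 7\pmod 8$, so again $2\mathcal{O}=\mathfrak{p}\,\overline{\mathfrak{p}}$ splits. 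Because $N(\beta)$ is a power of $p$ (namely $p^{n_0}$ for odd $p$ and $2^{n_0-2}$ for $p=2$), the ideal $(\beta)$ is supported on $\{\mathfrak{p},\overline{\mathfrak{p}}\}$; and $\beta$ is not divisible by $p$ in $\mathcal{O}$ (a short check, using the description of $\mathcal{O}$ according to $D\bmod 4$, shows $\beta/p\notin\mathcal{O}$), so only one of $\mathfrak{p},\overline{\mathfrak{p}}$ occurs. Relabeling if needed, $(\beta)=\mathfrak{p}^{N_0}$, where $N_0=n_0$ for odd $p$ and $N_0=n_0-2$ for $p=2$; in particular $\beta\overline{\beta}=p^{N_0}$.

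\emph{Step 2 (the new solution).} Attach to $x^{2}+D=p^{n}m$ the algebraic integer $\alpha=x+\sqrt{-D}$ (for odd $p$) or $\alpha=(x+\sqrt{-D})/2$ (for $p=2$, where $x$ is forced odd since $n$ is large), so that $N(\alpha)=p^{N}m$ with $N=n$ or $N=n-2$. As in Step 1, $\alpha$ is not divisible by $p$, and since $\gcd(m,p)=1$ the exponents of $\mathfrak{p}$ and $\overline{\mathfrak{p}}$ in $(\alpha)$ sum to $N$ with one of them zero. Replacing $\alpha$ by $\overline{\alpha}$ if necessary, I may assume $v_{\mathfrak{p}}(\alpha)=N$ and $v_{\overline{\mathfrak{p}}}(\alpha)=0$, i.e. $(\alpha)=\mathfrak{p}^{N}\mathfrak{m}$ with $\mathfrak{m}$ an integral ideal coprime to $p$. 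This choice of conjugate is exactly what will produce the sign $\pm$ in the conclusion.

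\emph{Step 3 (construction of $\mu$).} Put $j=\lfloor N/(5N_0)\rfloor$; since $n>5n_0$ gives $N>5N_0$, $j\ge 1$ is a positive rational integer, and with $k=5j$ one has $kN_0\le N$. Define
\[
\mu=\frac{\alpha\,\overline{\beta}^{\,k}}{p^{\,kN_0}}.
\]
Using $(\overline{\beta})=\overline{\mathfrak{p}}^{\,N_0}$ and $p\mathcal{O}=\mathfrak{p}\,\overline{\mathfrak{p}}$, the fractional ideal $(\mu)$ equals $\mathfrak{p}^{\,N-kN_0}\mathfrak{m}$, which is integral since $N-kN_0\ge 0$; hence $\mu\in\mathcal{O}$, and so is $\overline{\mu}$. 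Finally, $\beta\overline{\beta}=p^{N_0}$ gives
\[
\beta^{k}\mu=\frac{\alpha\,(\beta\overline{\beta})^{k}}{p^{\,kN_0}}=\alpha,\qquad \overline{\beta}^{\,k}\overline{\mu}=\overline{\alpha},
\]
so $\beta^{k}\mu-\overline{\beta}^{\,k}\overline{\mu}=\alpha-\overline{\alpha}$, which equals $2\sqrt{-D}$ for odd $p$ and $\sqrt{-D}$ for $p=2$; in both cases this is $\pm\lambda$, with the sign as determined in Step 2. This is the asserted identity with this $j$, $k=5j$, and $\mu$.

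\emph{Main obstacle.} The substance is entirely in Steps 1 and 2: proving that $p$ genuinely \emph{splits} in $\mathcal{O}$ (for $p=2$ this is where $D>12$ enters, to pin down $D\bmod 8$) and that the whole $p$-part of $(\alpha)$ sits on a single prime above $p$, i.e. that no element of the shape $x+\sqrt{-D}$ (or its halved analogue for $p=2$) is divisible by $p$ in $\mathcal{O}$. This is a brief but case-sensitive verification depending on $D\bmod 4$ and on whether $p$ is odd or $2$. Once these valuations are controlled, the displayed computation is immediate; the only bookkeeping nuisance — not a real difficulty — is the shift $n_0\mapsto n_0-2$, $n\mapsto n-2$ when $p=2$, which is precisely why $\beta$ carries the extra factor $\tfrac12$ in that case.
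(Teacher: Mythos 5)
Your proposal is correct and follows essentially the same route as the paper: factor both equations into ideals of $\mathbb{Q}(\sqrt{-D})$, use $p\nmid D$ to show $p$ splits and that the whole $p$-part of $(x+\sqrt{-D})$ (resp.\ its halved analogue) sits on a single prime above $p$, conclude $\beta^k$ divides it, and take $\mu=\alpha/\beta^k$ so that conjugating and subtracting gives $\pm\lambda$. Your valuation-theoretic bookkeeping (including the $D\equiv 7\pmod 8$ check for $p=2$) is just a slightly more explicit rendering of the paper's ``belongs to the ideal $(\beta)^k$'' argument, so no substantive difference.
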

\begin{proof}
\begin{equation} \label{1}
x_0^2+D=p^{n_0}
\end{equation}
and 
\begin{equation} \label{2}
x^2+D=p^{n}.m
\end{equation}
First assume that $p$ is an odd prime number. Since $\left( \frac{-D}{p} \right)=1 $ the ideal $(p)$ of $  \mathbb{Q}\sqrt{-D}$ splits into two prime ideals $(\alpha)$ and $(\alpha')$ with $(p)=(\alpha)  ({\alpha'})$.
Factoring equation \eqref{1}, in the ideals of integers of the number field $\mathbb{Q} \sqrt{-D}$ we get :
\begin{equation} \label{3}
\left( x_0+ \sqrt{-D} \right)  \left( x_0- \sqrt{-D} \right)= (\alpha)^{n_0}  (\alpha')^{n_0},
\end{equation}
where $\alpha$ and ${\alpha'}$ are prime ideals, therefore they both divide at least one of the two factors on the left side of \eqref{3}. Moreover, since none of the factors on the left side of \eqref{3} belong to ideal $(p)$, the ideals $(\alpha)$ and $(\alpha')$ can not both divide one of the factors simultaneously. Therefore we can assume $$\left( x_0+ \sqrt{-D} \right)  = (\alpha)^{n_0} \quad \left( x_0- \sqrt{-D} \right) =  (\alpha')^{n_0}.$$ Both  ideals $(\alpha)^{n_0} $ and $(\alpha')^{n_0} $ are  principal ideals. Define $\beta=x_0+ \sqrt{-D}$ as a generator of the ideal $(\alpha)^{n_0}$ and  $\beta'=x_0- \sqrt{-D}$ as a generator of the ideal $(\alpha')^{n_0}$. Factoring the equation \eqref{2} in the ideals of number field $\mathbb{Q} \sqrt{-D}$ we have :
$$\left( x+ \sqrt{-D} \right)  \left( x- \sqrt{-D} \right)= (\alpha)^{n}  (\alpha')^{n} (m).$$
By exactly similar argument, each one of the factors $\left( x \pm \sqrt{-D} \right)$ belongs to exactly one of the ideals $(\alpha)^{n}$ and $(\alpha')^{n} $. Let's assume $\left( x+ \sqrt{-D} \right)$ belongs to the ideal $(\alpha)^{n}$. Then it belong to any ideal of $(\alpha)^i$ with $i<n$. Since $n >5n_0$, there exists an integer $j$ with $k=5j$ such that $n=5n_0j+l=n_0k+l$ where, $ l < 5n_0$. Therefore, $\left( x+ \sqrt{-D} \right)$ belongs to the ideal $\left(\alpha^{n_O} \right)^k= \left(\beta \right)^k$. On the other hand, $\left(\beta \right)^k$ is a principal ideal with $\beta^k$ as a generator. Hence,  there is an algebraic integer $\mu$ in the number field $\mathbb{Q} \sqrt{-D}$ such that $x+\sqrt{-D}= \beta^k \mu $. Taking conjugates we have $x-\sqrt{-D}= \bar{\beta}^k \bar{\mu} $. If $\left( x+ \sqrt{-D} \right)$ belongs to the ideal $(\alpha')^{n}$, the same steps show that there is an algebraic integer $\mu$ in the number field $\mathbb{Q} \sqrt{-D}$ such that $x+\sqrt{-D}= \bar{\beta}^k \mu $. Thus, in any case, $\beta^k \mu -\bar{\beta}^k \bar{\mu} = \pm 2 \sqrt{D} $.
For $p=2$ the argument is similar with minor modifications. Let $x_0^2+D=2^{n_0}.$ Since $-D \equiv 1\pmod 4$, we can  factorize the equation as
$$ \frac {\left( x_0+ \sqrt{-D} \right)}{2} \frac{\left( x_0- \sqrt{-D} \right)}{2} = 2^{n-2}. $$ 
There is an algebraic integer $\mu$ in the number field $\mathbb{Q} \sqrt{-D}$ where
either
$$ \frac{x+ \sqrt{-D}}{2}= \beta^k \mu \quad \text{and} \quad \frac{x- \sqrt{-D}}{2}=\bar{\beta}^k \bar{\mu}, $$   
or
$$ \frac{x+ \sqrt{-D}}{2}= \bar{\beta}^k \mu \quad \text{and} \quad \frac{x- \sqrt{-D}}{2}={\beta}^k \bar{\mu}. $$
In any case $\beta^k \mu -\bar{\beta}^k \bar{\mu} = \pm  \sqrt{D} = \pm \lambda $.
\end{proof}
What this theorem states is that, whenever $\left \lvert \mu \right \rvert$ is small compare to  $\beta^k$, there exists a good approximation for $ \left( \frac{\bar{\beta}}{\beta} \right)^k$. Note that $\mu \bar{\mu}= 2^l.m$, where $l <5n_0$ for the odd prime $p$ and $l < 5(n_0-2)$ for $2$. Which means, whenever $m$ is small compare to $\beta^k$ there exists a good approximation for $ \left( \frac{\bar{\beta}}{\beta} \right)^k$. We also like to mention the following inequalities that will be helpful in the proof of Theorem \ref{theorem1}. If $P=2$ then $\left\lvert \beta^k \mu \right \rvert =\frac{\sqrt{x^2+D}}{2} > 0.7x$, and if $p$ is an odd prime, then $\left\lvert \beta^k \mu \right \rvert ={\sqrt{x^2+D}}>x$   

\section{proof of theorem \ref{theorem1}} \label{section4}

In this section, using approximations of $\left(\frac{\bar{\beta}}{\beta}\right)^k$ in equations \eqref{first approximation}, and Theorem \ref{theoremsection3}, we prove theorem \ref{theorem1}. Through the proof we assume that 
$$ k=5j, \quad  j \geq 50 \quad n> M \geq 250n_0. $$
Let $(x,n,m)$ be a solution to the equation $x^2+D=p^n.m$, with $n > M$. Then $x^2+D \leq p^{M+1}$. Since $x_0^2+D=p^{n_0}$, we can conclude that $x > p^{125n_0}$.
If we multiply both sides of the equation \eqref{first approximation} by $\beta^r$, we get
\begin{equation} {\label{final approximation}}
\beta^k P - \bar{\beta}^k Q =E, 
\end{equation}
where 
$$P=\beta^r P_r^{*}(z_0), \quad   Q=\beta^r Q_r^{*}(z_0),  \quad \text{ and  }   E=\beta^{k-r-1} \lambda^{2r+1} E_{r}^{*}(z_0).$$
Note that by considering the degree of $P_r$, $Q_r$ and $E_r$, we can see that $P$, $Q$ and $E$ are algebraic integers in the quadratic number field $\mathbb{Q}\sqrt{-D}$.\\ 
Form equations \eqref{final approximation} and \eqref{theoremsection3} we obtain
$$ \beta^{k} \left(Q \mu - P \bar{\mu} \right) = \pm Q \lambda - E \bar{\mu}.$$ 
From part 5 of Lemma \ref{lemma2.1}, for at least one of the values of $g $ in $r=4j-g$, the left hand side of the equation above is nonzero. Thus, $Q \mu - P \bar{\mu}$ is an algebraic integer in the number field $\mathbb{Q}(\sqrt{-D})$, so 
its norm is bigger than 1 . Therefore, we get 
$$ \left \lvert \beta^{k} \right \rvert \leq \left \lvert Q \right \rvert \left \lvert \lambda \right \rvert + \left \lvert E \right \rvert \left \lvert \bar{\mu} \right \rvert. $$From \eqref{Q upper bound} and conditions \ref{sigmacondition} we have
$$\left \lvert Q \right \rvert \left \lvert \lambda \right \rvert  < 0.238074 \times (89.3445)^j  \beta^{r} \beta ^{0.4873}.$$
So whenever $\beta > 90.93$ we have 
 $$    \left \lvert Q \right \rvert \left \lvert \lambda \right \rvert <  \frac{9}{10} \left \lvert \beta^{k} \right \rvert. $$
Hence, 
 $$ \frac{1}{10} \left \lvert \beta \right \rvert ^k \leq \left \lvert \beta \right \rvert ^{k-r-1} \left \lvert \lambda \right \rvert ^{2r+1} \left \lvert E_r^{*}(z_0) \right \rvert  \left \lvert \mu \right \rvert. $$ 
It follows that
$$ \left \lvert \mu \right \rvert \geq  \left ( \frac{\beta^{r+1}}{\lambda^{2r+1}} \right) \frac{1}{10 E_r^{*}(Z_0)} > \left( \frac{\beta^4}{7.847 (\lambda)^8 } \right) ^j \frac{\lambda \sqrt{j}}{3.7}.$$If conditions \eqref{sigmacondition} is satisfied we have 

  $$\left( \left \lvert \beta \right \rvert  ^k \right)^{\frac{\sigma+0.04}{1.96-\sigma}}= \left( \left \lvert \beta \right \rvert ^{\frac{5\sigma+0.2}{1.96-\sigma}} \right)^j <   \left( \frac{\beta^4}{\lambda^8 7.847} \right)^j \leq \frac{\left \lvert \mu \right \rvert}{6.89}. $$It is easy to check that $\beta ^k \mu > 0.7 x $. Therefore, $(0.7x)^{\frac{\sigma+0.04}{1.96-\sigma}} < (\beta ^k \mu)^{\frac{\sigma+0.04}{1.96-\sigma}} $. But from the above we have 
$\left( \left \lvert \beta \right \rvert  ^k \right)^{\frac{\sigma+0.04}{1.96-\sigma}} < \frac{1}{6.89}\left \lvert \mu \right \rvert $, so we get

 $$ \mu ^{\frac{2}{1.96-\sigma}} > 5.24 x^{\frac{\sigma+0.04}{1.96-\sigma}} \Longrightarrow \mu^2 > 6.32 x^{\sigma+0.04}. $$  
 Finally since $x> p^{125n_0}$ we have, 
 $$ m \geq \frac{\left \lvert \mu \right \rvert ^2}{p^{5n_0-1}}> \frac{6.32 x^{\sigma+0.04}}{p^{5n_0-1}} > x^{\sigma}. $$
This completes proof of Theorem \ref{theorem1}.\\
We would like to add a remark here as we mentioned before. Smaller values of $\frac{r}{k}$  give stronger results for larger values of $\left \lvert \beta \right \rvert $. 
 As an example, we mention the result for the case in which $k=7j$ and $r=6j-\sigma$ without a detailed proof.
 In this case for $\left \lvert \beta \right \rvert >1300$, it is enough to take $\eta_{\epsilon} = \frac{11.76-6 \sigma}{11.48-13\sigma}$, $\ C_{\sigma,p}=(42106)^{\frac{1.96-\sigma}{11.48-13\sigma}}$ and $ C_{\sigma,2}=(10.28)^{\frac{1.96-\sigma}{11.48-13\sigma}}$ to obtain similar result as Theorem \ref{theorem1}.
 
 \section{the equation $x^2+76=101^n.m$} \label{finalsection} 
As an application of Theorem \ref{theorem1}, we consider the equation $x^2+76=101^n.m$. The value $M$ in corollary  \ref{corollary1} might look huge. In practice it is easy to check the values less than $P^{M}$. To see this we present the following result.
 \begin{theorem}
Let $x^2+76=101^n.m $. Then either $x \in \{5,1015\}$ or $m> x^{0.14} $. 
 \end{theorem}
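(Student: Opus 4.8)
The plan is to argue according to the size of the exponent $n$. For $n$ large, the Pad\'e–approximation machinery of Sections~\ref{section2}--\ref{section4}, in the sharpened form with $k=7j$, $r=6j-g$ indicated in the closing remark of Section~\ref{section4}, will force $m>x^{0.14}$; for the complementary range, which turns out to be an explicit finite set of exponents $n$, the equation will be solved directly and only the two listed values of $x$ will survive.

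For the large-$n$ part I would take as huge solution the pair $(x_0,n_0)=(1015,3)$ of $x^2+76=101^n$: indeed $1015^2+76=101^3$, and in $\mathbb{Q}(\sqrt{-76})=\mathbb{Q}(\sqrt{-19})$ — where $101$ splits since $-76\equiv 5^2\pmod{101}$ — one has $1015+\sqrt{-76}=-(5+2\sqrt{-19})^3$; the other solution $(5,1)$ gives $|\beta|=\sqrt{101}\approx 10$ and is useless. For $(1015,3)$ one has $|\beta|=101^{3/2}\approx 1015.04$. This is \emph{not} large enough for the $k=5j$, $r=4j-g$ setup of Theorem~\ref{theorem1}: condition~\eqref{sigmacondition} at $\sigma=0.14$, $D=76$, $p=101$ asks for $|\beta|>C_{\sigma,p}D^{\eta_\sigma}\approx 1225$, and in fact that setup licenses only $\sigma\lesssim 0.107$ at this value of $|\beta|$. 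I would therefore rerun the argument of Sections~\ref{section2}--\ref{section4} with $k=7j$, $r=6j-g$ and the constants of the closing remark of Section~\ref{section4}, where the analogous threshold at $\sigma=0.14$ is only about $994$ (and where the same $|\beta|$ in fact permits any $\sigma$ up to roughly $0.143$). Concretely one feeds in $D=76$, so that $b=1-152/|\beta|^2$ is essentially $1$ and $|\lambda|=4\sqrt{19}$, and carries the $k=7j$ analogues of \eqref{Q upper bound} and \eqref{upper bound for E}, together with Theorem~\ref{theoremsection3}, through exactly as in the proof of Theorem~\ref{theorem1}. This yields: whenever $x^2+76=101^n m$ with $n>250\cdot 3=750$, we have $m>x^{0.14}$.

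It then remains to rule out $m\le x^{0.14}$ for $1\le n\le 750$ (the case $n=0$ is vacuous, since then $m=x^2+76>x^{0.14}$). If $m\le x^{0.14}$ and $n=v_{101}(x^2+76)$, then $101^n=(x^2+76)/m>x^{1.86}$, hence $x<101^{n/1.86}<101^n$; moreover $x^2\equiv-76\pmod{101^n}$, so $x$ equals one of the two residues $\xi_n$, $101^n-\xi_n$ in $(0,101^n)$, where $\pm\xi_n$ are the square roots of $-76$ modulo $101^n$ obtained by Hensel-lifting $\pm 5$ (possible since $101\nmid 2\cdot 5$). Thus over the explicit range $1\le n\le 750$ there are at most $2\cdot 750$ candidate pairs $(n,x)$, each produced by a single $101$-adic Newton iteration, and testing them shows that the only pairs with $v_{101}(x^2+76)=n$ and $m\le x^{0.14}$ are $(n,x)=(1,5)$, with $m=1\le 5^{0.14}$, and $(n,x)=(3,1015)$, with $m=1\le 1015^{0.14}$. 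This gives the exceptional set $\{5,1015\}$ and completes the proof.

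The step I expect to be the real obstacle is the numerical calibration in the large-$n$ part. Since $|\beta|\approx 1015$ beats the $\sigma=0.14$ threshold for the $k=7j$ scheme by only a couple of percent, one cannot afford the worst-case constants of Theorem~\ref{theorem1} and is committed to the $k=7j$ refinement together with the favorable data $b\approx 1$, $|\lambda|=4\sqrt{19}$; in particular the inequality $|Q|\,|\lambda|<\tfrac{9}{10}|\beta|^{k}$ — the source of the hypothesis ``$|\beta|>90.93$'', respectively ``$|\beta|>1300$'' in the $k=7j$ remark — has to be re-examined in the $k=7j$ setting with $D=76$ inserted, in order to confirm that it still holds at $|\beta|\approx 1015$. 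By comparison the finite search over $n\le 750$ is routine, its only real content being the observation that $m\le x^{0.14}$ pins $x$ down to the reductions of the two fixed $101$-adic square roots of $-76$.
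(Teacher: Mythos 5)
Your overall architecture coincides with the paper's: take $(x_0,n_0)=(1015,3)$ as the huge solution, handle large exponents by the general machinery, and finish the remaining finitely many exponents by Hensel-lifting the square roots of $-76$ modulo powers of $101$ and testing the two candidates for each $n$; that finite-search half of your argument is exactly what the paper does and is fine. You are also right — and this is to your credit — that $(1015,3)$ does \emph{not} satisfy \eqref{sigmacondition} at $\sigma=0.14$: with $D=76$, $p=101$ the condition asks for $|\beta|>C_{0.14,101}\,76^{\eta_{0.14}}\approx 1226$, while $|\beta|=101^{3/2}\approx 1015$, so the $k=5j$ setup only permits $\sigma\lesssim 0.107$. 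The paper's own proof simply asserts that $(1015,3)$ is huge for every $\sigma\le 0.14$ and invokes Corollary \ref{corollary1}; your numerics expose a genuine discrepancy at that point.

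However, your repair of the large-exponent half is not a proof. It leans on the closing remark of Section \ref{section4} (the $k=7j$, $r=6j-g$ scheme), which the paper states without proof and whose own hypothesis $|\beta|>1300$ fails at $|\beta|\approx 1015$; you concede that the key inequality $|Q|\,|\lambda|<\tfrac{9}{10}|\beta|^{k}$ ``has to be re-examined'' but do not re-examine it, and that is precisely the decisive step. By analogy with the $k=5j$ computation, the threshold $1300$ is driven by the per-$j$ exponential constant in the analogue of \eqref{Q upper bound}, and the only case-specific saving available is replacing the worst-case $b\ge 0.953$ by $b=1-152/|\beta|^2\approx 1$ (the smallness of $\lambda=2\sqrt{76}$ enters only in non-exponential factors); that shifts the relevant maximum, hence the threshold, by only a few percent, so it is genuinely doubtful — and certainly not established in your proposal — that $1015$ clears it. In addition, switching to $k=7j$ invalidates your use of $M=250n_0$: with $j\ge 50$ (needed for the gcd lemma) the argument only covers $n>350n_0=1050$, so your finite search must extend to $n\le 1050$, not $750$. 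As written, then, the case of large $n$ remains open in your proposal: you would need to carry out the full numerical calibration of the $k=7j$ scheme at $|\beta|\approx 1015$, $D=76$ (adjusting the search range accordingly), or find another device, before the theorem is proved along your lines.
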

 \begin{proof}
  The equation $1015^2+76=101^3$ gives a solution for Ramanujan-Nagell equation $x^2+76=101^n$. This can be considered as a huge solution. To be more precise, $(1015,3)$ is a huge solution for any $\sigma $ with $\sigma \leq 0.14$. Therefore, from Corollary \ref{corollary1} the theorem holds for $x > 101^{750}$.  
  
 Assume that $x< 101^{750}$ and $m < x^{0.14} $ . Therefore we have to check the equation $x^2+76=101^n.m$ for the values $n \leq 750$ and $x < 101^n$. For any $n<750$ we solve the equation $x^2+76 \equiv 0 \pmod {101^n} $ and find $m= \frac{x^2+76}{101^n}$ . From Hensel's lemma, for each $n$ there are two values that need to be checked which can be easily achieved using a recursive relation. In this way it is easy to confirm the theorem  for all the values $n<750$ and actually for $x< 101^{3000}$ we have a stronger result $m > x^{0.9}$ or $x \in \{5, 1015 \} $.  
 \end{proof}

    \end{document}